\providecommand{\U}[1]{\protect\rule{.1in}{.1in}}
\providecommand{\U}[1]{\protect\rule{.1in}{.1in}}
\newtheorem{theorem}{Theorem}
\newtheorem{corollary}[theorem]{Corollary}
\newtheorem{definition}[theorem]{Definition}
\newtheorem{lemma}[theorem]{Lemma}
\newtheorem{remark}[theorem]{Remark}
\newenvironment{proof}[1][Proof]{\noindent\textbf{#1.} }{\ \rule{0.5em}{0.5em}}
\begin{document}

\title{\textbf{Extrapolation and Local Acceleration of an Iterative Process }\\\textbf{for Common Fixed Point Problems}}
\author{Andrzej Cegielski$^{1}$ and Yair Censor$^{2}\medskip$\\$^{1}$Faculty of Mathematics, Computer Science and \\Econometrics, University of Zielona G\'{o}ra, \\ul. Szafrana 4a, 65-514 Zielona G\'{o}ra, Poland\\(a.cegielski@wmie.uz.zgora.pl)\medskip\\$^{2}$Department of Mathematics, University of Haifa\\Mt. Carmel, Haifa 31905, Israel\\(yair@math.haifa.ac.il)}
\date{February 16, 2012. Revised: April 15, 2012}
\maketitle

\begin{abstract}
We consider sequential iterative processes for the common fixed point problem
of families of \textit{cutter operators} on a Hilbert space. These are
operators that have the property that, for any point $x\in\mathcal{H}$, the
hyperplane through $Tx$ whose normal is $x-Tx$ always \textquotedblleft
cuts\textquotedblright\ the space into two half-spaces one of which contains
the point $x$ while the other contains the (assumed nonempty) fixed point set
of $T.$ We define and study generalized relaxations and extrapolation of
cutter operators and construct extrapolated cyclic cutter operators. In this
framework we investigate the Dos Santos local acceleration method in a unified
manner and adopt it to a composition of cutters. For these we conduct
convergence analysis of successive iteration algorithms.

\textbf{Keywords}: Common fixed point, cyclic projection method, cutter
operator, quasi-nonexpansive operators, Dos Santos local acceleration.

\textbf{AMS 2010 Subject Classification:} 46B45, 37C25, 65K15, 90C25

\end{abstract}

\section{Introduction\label{sec:intro}}

Our point of departure that motivates us in this work is a local acceleration
technique of Cimmino's \cite{Cim38} well-known simultaneous projection method
for linear equations. This technique is referred to in the literature as the
\textit{Dos Santos }(DS)\textit{\ }method, see Dos Santos \cite{DSa87} and
Bauschke and Borwein \cite[Section 7]{BB96}, although Dos Santos attributes
it, in the linear case, to De Pierro's Ph.D. Thesis \cite{DPi81}. The method
essentially uses the line through each pair of consecutive Cimmino iterates
and chooses the point on this line which is closest to the solution $x^{\ast}$
of the linear system $Ax=b$. The nice thing about it is that existence of the
solution of the linear system must be assumed, but the method does not need
the solution point $x^{\ast}$ in order to proceed with the locally accelerated
DS iterative process. This approach was also used by Appleby and Smolarski
\cite{AS05}. On the other hand, while trying to be as close as possible to the
solution point $x^{\ast}$ in each iteration, the method is not yet known to
guarantee overall acceleration of the process. Therefore, we call it a
\textit{local acceleration }technique. In all the above references the DS
method works for \textit{simultaneous projection methods} and our first
question was whether it can also work with \textit{sequential projection
methods}? Once we discovered that this is possible, the next natural question
for sequential locally accelerated DS iterative process, is how far can the
principle of the DS method be upgraded from the linear equations model? Can it
work for closed and convex sets feasibility problems? I.e., can the locally
accelerated DS method be preserved if orthogonal projections onto hyperplanes
are replaced by metric projections onto closed and convex sets? Furthermore,
can the latter be replaced by subgradient projectors onto closed and convex
sets in a valid locally accelerated DS method? Finally, can the theory be
extended to handle common fixed point problems? If so, for which classes of operators?

In this study we answer these questions by focusing on the class of operators
$T:\mathcal{H}\rightarrow\mathcal{H}$, where $\mathcal{H}$ is a Hilbert space,
that have the property that, for any $x\in\mathcal{H}$, the hyperplane through
$Tx$ whose normal is $x-Tx$ always \textquotedblleft cuts\textquotedblright%
\ the space into two half-spaces one of which contains the point $x$ while the
other contains the (assumed nonempty) fixed point set of $T.$ This explains
the name \textit{cutter operators} or \textit{cutters} that we introduce here.
These operators, introduced and investigated by Bauschke and Combettes
\cite[Definition 2.2]{BC01} and by Combettes \cite{Com01}, play an important
role in optimization and feasibility theory since many commonly used operators
are actually cutters. We define generalized relaxations and extrapolation of
cutter operators and construct \textit{extrapolated cyclic cutter operators.
}For this cyclic extrapolated cutters we present convergence results of
successive iteration processes for common fixed point problems.

Finally we show that these iterative algorithmic frameworks can handle
sequential locally accelerated DS iterative processes, thus, cover some of the
earlier results about such methods and present some new ones.

The paper is organized as follows. In Section \ref{sec:prelim} we give the
definition of cutter operators and bring some of their properties that will be
used here. Section \ref{sec: main} presents the main convergence results.
Applications to specific convex feasibility problems, which show how the
locally accelerated DS iterative processes follow from our general convergence
results, are furnished in Section \ref{sect:applications}.

\section{Preliminaries\label{sec:prelim}}

Let $\mathcal{H}$ be a real Hilbert space with an inner product $\langle
\cdot,\cdot\rangle$ and with a norm $\Vert\cdot\Vert$. Given $x,y\in
\mathcal{H}$ we denote%
\begin{equation}
H(x,y):=\left\{  u\in\mathcal{H}\mid\left\langle u-y,x-y\right\rangle
\leq0\right\}  .
\end{equation}

\begin{definition}
\label{d-sep}%
\rm\
An operator $T:\mathcal{H}\rightarrow\mathcal{H}$ is called a \textit{cutter
operator} or, in short, a \textit{cutter}%
\begin{equation}
\text{if }\operatorname*{Fix}T\subseteq H(x,Tx)\text{ \ for all }%
x\in\mathcal{H},
\end{equation}
where $\operatorname*{Fix}T$ is the fixed point set of $T$, equivalently,%
\begin{equation}
q\in\operatorname*{Fix}T\text{ implies that }\left\langle
Tx-x,Tx-q\right\rangle \leq0\text{ \ for all }x\in\mathcal{H}.
\label{def directed}%
\end{equation}

\end{definition}

\bigskip

The inequality in (\ref{def directed}) can be written equivalently in the form%
\begin{equation}
\left\langle Tx-x,q-x\right\rangle \geq\Vert Tx-x\Vert^{2}\text{.}
\label{e-sep1}%
\end{equation}
The class of cutter operators is denoted by $\mathcal{T}$, i.e.,%
\begin{equation}
\mathcal{T}:=\left\{  T:\mathcal{H}\rightarrow\mathcal{H}\mid
\operatorname*{Fix}T\subseteq H(x,Tx)\text{ \ for all }x\in\mathcal{H}%
\right\}  .
\end{equation}

The class $\mathcal{T}$ of operators was introduced and investigated by
Bauschke and Combettes in \cite[Definition 2.2]{BC01} and by Combettes in
\cite{Com01}. Yamada and Ogura \cite{YO04} and Mainge \cite{Mai10} named the
cutters \textit{firmly quasi-nonexpansive} operators. These operators were
named \textit{directed operators }in Zaknoon \cite{Zak03} and further employed
under this name by Segal \cite{Seg08} and Censor and Segal \cite{CS08, CS08a,
CS09}. Cegielski \cite[Definition 2.1]{Ceg10} named and studied these
operators as \textit{separating operators}. Since both \textit{directed
}and\textit{\ separating }are key words of other, widely-used, mathematical
entities we decided in \cite{CC11} to use the term \textit{cutter operators}.
This name can be justified by the fact that the bounding hyperplane of
$H(x,Tx)$ \textquotedblleft cuts\textquotedblright\ the space into two
half-spaces, one which contains the point $x$ while the other contains the set
$\operatorname*{Fix}T$. We recall definitions and results on cutter operators
and their properties as they appear in \cite[Proposition 2.4]{BC01} and
\cite{Com01}, which are also sources for further references.

Bauschke and Combettes \cite{BC01} showed the following:

\begin{itemize}
\item[(i)] The set of all fixed points of a cutter operator with nonempty
$\operatorname*{Fix}T$ is a closed and convex subset of $\mathcal{H}$, because
$\operatorname*{Fix}T=\cap_{x\in\mathcal{H}}H(x,Tx)$.
\end{itemize}

Denoting by $I$ the identity operator,%
\begin{equation}
\text{if }T\in\mathcal{T}\text{ then }I+\lambda(T-I)\in\mathcal{T}\text{ \ for
all }\lambda\in\lbrack0,1]. \label{BCresult}%
\end{equation}
This class of operators is fundamental because many common types of operators
arising in convex optimization belong to the class and because it allows a
complete characterization of Fej\'{e}r-monotonicity \cite[Proposition
2.7]{BC01}. The localization of fixed points is discussed by Goebel and Reich
in \cite[pp. 43--44]{GR84}. In particular, it is shown there that a firmly
nonexpansive operator, namely, an operator $T:\mathcal{H}\rightarrow
\mathcal{H}$ that fulfills%
\begin{equation}
\left\Vert Tx-Ty\right\Vert ^{2}\leq\left\langle Tx-Ty,x-y\right\rangle \text{
for all }x,y\in\mathcal{H},
\end{equation}
and has a fixed point, satisfies (\ref{def directed}) and is, therefore, a
cutter operator. The class of cutter operators, includes additionally,
according to \cite[Proposition 2.3]{BC01}, among others, the resolvents of a
maximal monotone operators, the orthogonal projections and the subgradient
projectors. Another family of cutters appeared recently in Censor and Segal
\cite[Definition 2.7]{CS08a}. Note that every cutter operator belongs to the
class of operators $\mathcal{F}^{0},$ defined by Crombez \cite[p. 161]{Cro05},%
\begin{equation}
\mathcal{F}^{0}:=\left\{  T:\mathcal{H}\rightarrow\mathcal{H}\mid\left\Vert
Tx-q\right\Vert \leq\left\Vert x-q\right\Vert \text{ for all }q\in
\operatorname*{Fix}T\text{ and }x\in\mathcal{H}\right\}  \text{,} \label{PC}%
\end{equation}
whose elements are called elsewhere quasi-nonexpansive or paracontracting
operators. An example of a quasi-nonexpansive operator $T:\mathcal{H}%
\rightarrow\mathcal{H}$ is a nonexpansive one, i.e., an operator satisfying
$\Vert Tx-Ty\Vert\leq\Vert x-y\Vert$ for all $x,y\in\mathcal{H}$, with
$\operatorname*{Fix}T\neq\emptyset$.

\begin{definition}%
\rm\
Let $T:\mathcal{H}\rightarrow\mathcal{H}$ and let $\lambda\in(0,2)$. We call
the operator $T_{\lambda}:=I+(1-\lambda)T$ a \textit{relaxation} of
$T$\texttt{.}
\end{definition}

\begin{definition}%
\rm\
We say that an operator $T:\mathcal{H}\rightarrow\mathcal{H}$ with
$\operatorname*{Fix}T\neq\emptyset$ is \textit{strictly quasi-nonexpansive}%
\texttt{\ }if%
\begin{equation}
\Vert Tx-z\Vert<\Vert x-z\Vert
\end{equation}
for all $x\notin\operatorname*{Fix}T$ and for all $z\in\operatorname*{Fix}T$.
We say that $T$ is $\alpha$-\textit{strongly quasi-nonexpansive}%
,\texttt{\ }where $\alpha>0$, or, in short, \textit{strongly
quasi-nonexpansive} if%
\begin{equation}
\Vert Tx-z\Vert^{2}\leq\Vert x-z\Vert^{2}-\alpha\Vert Tx-x\Vert^{2}%
\end{equation}
for all $x\in\mathcal{H}$ and for all $z\in\operatorname*{Fix}T$.
\end{definition}

It is well-known that a relaxation of a cutter operator is strongly
quasi-nonexpansive (see \cite[Proposition 2.3(ii)]{Com01}). Since the converse
implication is also true we have the following result.

\begin{lemma}
\label{l-cut-SQNE}Let $T:\mathcal{H}\rightarrow\mathcal{H}$ be an operator
which has a fixed point and let $\lambda\in(0,2)$. Then $T$ is a cutter if and
only if its relaxation $T_{\lambda}$ is $(2-\lambda)/\lambda$-strongly quasi-nonexpansive.
\end{lemma}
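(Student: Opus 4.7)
The plan is a direct computation exploiting the fact that the two inequalities in play differ only by linear rearrangement of inner products and norms. First I would record the obvious observation that $\operatorname{Fix}T_{\lambda}=\operatorname{Fix}T$ for every $\lambda\in(0,2)$, so in particular the fixed‐point set $\operatorname{Fix}T$ appearing in the SQNE condition for $T_{\lambda}$ is the same set as in the cutter condition for $T$.

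Next, I would use the identity $T_{\lambda}x-x=\lambda(Tx-x)$ together with the decomposition $T_{\lambda}x-z=(x-z)+\lambda(Tx-x)$ and expand the square to obtain, for every $x\in\mathcal{H}$ and every $z\in\operatorname{Fix}T$,
\begin{equation}
\|T_{\lambda}x-z\|^{2}=\|x-z\|^{2}-2\lambda\langle Tx-x,z-x\rangle+\lambda^{2}\|Tx-x\|^{2}.
\end{equation}
Since also $\|T_{\lambda}x-x\|^{2}=\lambda^{2}\|Tx-x\|^{2}$, the defining inequality of $\alpha$-strong quasi-nonexpansivity for $T_{\lambda}$, namely $\|T_{\lambda}x-z\|^{2}\leq\|x-z\|^{2}-\alpha\|T_{\lambda}x-x\|^{2}$, is equivalent, after cancellation of $\|x-z\|^{2}$ and division by $2\lambda>0$, to
\begin{equation}
\langle Tx-x,z-x\rangle\geq\tfrac{\lambda(1+\alpha)}{2}\,\|Tx-x\|^{2}.
\end{equation}

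Plugging in $\alpha=(2-\lambda)/\lambda$ makes the coefficient $\lambda(1+\alpha)/2$ equal to $1$, so this last inequality is exactly the cutter condition \eqref{e-sep1}. Thus the chain of equivalences yields both directions of the lemma at once: $T\in\mathcal{T}$ iff the inequality above holds for all $x$ and all $z\in\operatorname{Fix}T$ iff $T_{\lambda}$ is $(2-\lambda)/\lambda$-strongly quasi-nonexpansive. There is no real obstacle here beyond keeping the algebraic coefficients straight; the only subtlety is to note that neither direction requires any extra regularity assumption on $T$ beyond $\operatorname{Fix}T\neq\emptyset$, because the whole argument is pointwise in $x$ and $z$ and never uses continuity, linearity, or nonexpansivity of $T$.
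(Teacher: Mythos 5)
Your proposal is correct and follows essentially the same route as the paper: both proofs use $T_{\lambda}x-x=\lambda(Tx-x)$, expand $\Vert T_{\lambda}x-z\Vert^{2}=\Vert (x-z)+\lambda(Tx-x)\Vert^{2}$, and observe that after cancellation and division by $2\lambda>0$ the $(2-\lambda)/\lambda$-SQNE inequality is identically the cutter inequality \eqref{e-sep1}, giving both implications at once. The only cosmetic difference is that the paper packages the computation as a single identity equal to $2\lambda\langle z-Tx,x-Tx\rangle$, whereas you isolate the coefficient $\lambda(1+\alpha)/2$ and then specialize $\alpha$; both are the same pointwise algebraic argument.
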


\begin{proof}
Since%
\begin{equation}
T_{\lambda}x-x=\lambda(Tx-x)\text{,}%
\end{equation}
we have, by the properties of the inner product,%
\begin{align}
&  \Vert T_{\lambda}x-z\Vert^{2}-\Vert x-z\Vert^{2}+\frac{2-\lambda}{\lambda
}\Vert T_{\lambda}x-x\Vert^{2}\nonumber\\
&  =\Vert x-z+\lambda(Tx-x)\Vert^{2}-\Vert x-z\Vert^{2}+\lambda(2-\lambda
)\Vert Tx-x\Vert^{2}\nonumber\\
&  =2\lambda(\Vert Tx-x\Vert^{2}-\langle z-x,Tx-x\rangle)=2\lambda\langle
z-Tx,x-Tx\rangle,
\end{align}
for all $x\in\mathcal{H}$ and for all $z\in\operatorname*{Fix}T$, from which
the required result follows.
\end{proof}

\begin{definition}%
\rm\
We say that an operator $T:\mathcal{H}\rightarrow\mathcal{H}$ is
\textit{demiclosed} at $0$ if for any weakly converging sequence
$\{x^{k}\}_{k=0}^{\infty},$ $x^{k}\rightharpoonup y\in\mathcal{H}$ as
$k\rightarrow\infty$, with $Tx^{k}\rightarrow0$ as $k\rightarrow\infty,$ we
have $Ty=0$.
\end{definition}

It is well-known that for a nonexpansive operator $T:\mathcal{H}%
\rightarrow\mathcal{H}$, the operator $T-I$ is demiclosed at $0,$ see Opial
\cite[Lemma 2]{Opi67}.

\begin{definition}%
\rm\
We say that an operator $T:\mathcal{H}\rightarrow\mathcal{H}$ is
\textit{asymptotically regular} if%
\begin{equation}
\Vert T^{k+1}x-T^{k}x\Vert\rightarrow0,\text{ as }k\rightarrow\infty,
\end{equation}
for all $x\in\mathcal{H}$.
\end{definition}

It is well-known that if $T$ is a nonexpansive and asymptotically regular
operator with $\operatorname*{Fix}T\neq\emptyset$ then, for any $x\in
\mathcal{H}$, the sequence $\{T^{k}x\}_{k=0}^{\infty}$ converges weakly to a
fixed point of $T,$ see \cite[Theorem 1]{Opi67}.

\section{Main results\label{sec: main}}

We deal in this paper with a finite family of cutter operators $U_{i}%
:\mathcal{H}\rightarrow\mathcal{H}$, $i=1,2,\ldots,m$, with $\bigcap_{i=1}%
^{m}\operatorname*{Fix}U_{i}\neq\emptyset$ and with compositions of $U_{i}$,
$i=1,2,\ldots,m$. We propose local acceleration techniques for algorithms
which apply this operation. For an operator $U:\mathcal{H}\rightarrow
\mathcal{H}$ we define the operator $U_{\sigma,\lambda}:\mathcal{H}%
\rightarrow\mathcal{H}$ by%
\begin{equation}
U_{\sigma,\lambda}x:=x+\lambda\sigma(x)(Ux-x)\text{, } \label{e-Tx}%
\end{equation}
where $\lambda\in(0,2)$ is a \textit{relaxation parameter} and $\sigma
:\mathcal{H}\rightarrow(0,+\infty)$ is a \textit{step size function}. We call
the operator $U_{\sigma,\lambda}$ the \textit{generalized relaxation} of $U$
(cf. \cite[Section 1]{Ceg10} and \cite[Definition 9.16]{CC11}). A generalized
relaxation $U_{\sigma,\lambda}$ of $U$ with $\lambda\sigma(x)\geq1$ for all
$x\in\mathcal{H}$ is called an \textit{extrapolation} of $U$. Of course, for
$x\in\operatorname*{Fix}U$, we can set $\sigma(x)$ arbitrarily, e.g.,
$\sigma(x)=1$. Denoting $U_{\sigma}:=U_{\sigma,1}$, it is clear that%
\begin{equation}
U_{\sigma,\lambda}x=x+\lambda(U_{\sigma}x-x), \label{e-Tx2}%
\end{equation}
and that $\operatorname*{Fix}U_{\sigma,\lambda}=\operatorname*{Fix}U_{\sigma
}=\operatorname*{Fix}U$ (note that $\sigma(x)>0$ for all $x\in\mathcal{H}$).

Let $U_{i}:\mathcal{H}\rightarrow\mathcal{H}$ be a cutter operator,
$i=1,2,\ldots,m$, with $\bigcap_{i=1}^{m}\operatorname*{Fix}U_{i}\neq
\emptyset$. Define the operator $U:\mathcal{H}\rightarrow\mathcal{H}$ as the
composition%
\begin{equation}
U:=U_{m}U_{m-1}\cdots U_{1}. \label{eq:cyclic-op}%
\end{equation}
Since any cutter operator is strongly quasi-nonexpansive, (see Lemma
\ref{l-cut-SQNE}), $\operatorname*{Fix}U=\bigcap_{i=1}^{m}\operatorname*{Fix}%
U_{i}$ and $U$ is strongly quasi-nonexpansive, see \cite[Proposition
2.10]{BB96}. Consequently, $U$ is asymptotically regular, see \cite[Corollary
2.8]{BB96}, and, if $U$ is nonexpansive then any sequence $\{x^{k}%
\}_{k=0}^{\infty},$ generated by the recurrence (Piccard iteration)%
\begin{equation}
x^{0}\in\mathcal{H}\text{ is arbitrary, and }x^{k+1}=Ux^{k}\text{, for all
}k\geq0,\text{ } \label{e-cycl-sep}%
\end{equation}
converges weakly to a fixed point of $U$, see \cite[Theorem 1]{Opi67}. We
extend this convergence result to the generalized relaxation of $U$, defined
by (\ref{e-Tx}). We call an operator $U$ of the form (\ref{eq:cyclic-op}) a
\textit{cyclic cutter}. Contrary to the simultaneous cutter, a cyclic cutter
needs not to be a cutter. This \textquotedblleft contradiction of
terms\textquotedblright\ resembles the situation with the, so-called,
\textquotedblleft subgradient projection\textquotedblright\ onto a convex set
which needs not to be a projection onto the set because it needs not be an
element of the set.

In order to prove our convergence result for the generalized relaxation of $U
$, defined by (\ref{e-Tx}), let $S_{0}:=I$ and $S_{i}:=U_{i}U_{i-1}\cdots
U_{1}$, $i=1,2,\ldots,m$. Of course, $U=S_{m}$. Further, denote%
\begin{equation}
u^{0}=x,\ u^{i}=U_{i}u^{i-1}\text{ \ and \ }y^{i}=u^{i}-u^{i-1},\text{ \ for
all \ }i=1,2,\ldots,m. \label{e-yi}%
\end{equation}
According to this notation, used throughout this paper, we have $u^{i}=S_{i}x
$ and $\sum_{j=i}^{m}y^{j}=Ux-S_{i-1}x$, $i=1,2,\ldots,m$, in particular,
$\sum_{j=1}^{m}y^{j}=Ux-x$.

\begin{lemma}
\label{l-Sx-x}If $U_{i}:\mathcal{H}\rightarrow\mathcal{H}$, $i=1,2,\ldots,m$,
are cutter operators such that $\bigcap_{i=1}^{m}\operatorname*{Fix}U_{i}%
\neq\emptyset$, then for any $z\in\bigcap_{i=1}^{m}\operatorname*{Fix}U_{i}$
the following inequalities hold%
\begin{equation}
\langle Ux-x,z-x\rangle\geq\sum_{i=1}^{m}\langle y^{i}+y^{i+1}+\cdots
+y^{m},y^{i}\rangle\geq\frac{1}{2}\sum_{i=1}^{m}\Vert y^{i}\Vert^{2}\geq
\frac{1}{2m}\left\Vert \sum_{i=1}^{m}y^{i}\right\Vert ^{2}\text{,}
\label{e-main2}%
\end{equation}
for all $x\in\mathcal{H}$.
\end{lemma}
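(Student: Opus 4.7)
The plan is to prove each of the three inequalities in (\ref{e-main2}) in turn. Throughout, I will use the abbreviation $Y_i := y^i + y^{i+1} + \cdots + y^m = Ux - u^{i-1}$ for $i = 1,\ldots,m$, and set $Y_{m+1} := 0$. Note that $Y_1 = Ux - x$ and $Y_i - Y_{i+1} = y^i$.

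For the first inequality, I would apply the cutter property to each $U_i$ at the point $u^{i-1}$ with the common fixed point $z$. By (\ref{e-sep1}),
\[
\langle y^i, z - u^{i-1}\rangle = \langle U_i u^{i-1} - u^{i-1}, z - u^{i-1}\rangle \geq \|y^i\|^2.
\]
Since $z - u^{i-1} = (z-x) - \sum_{j=1}^{i-1} y^j$, rearranging gives $\langle y^i, z - x\rangle \geq \|y^i\|^2 + \sum_{j=1}^{i-1}\langle y^i, y^j\rangle$. Summing over $i = 1,\ldots,m$ and using $\sum_{i=1}^m y^i = Ux - x$ on the left yields
\[
\langle Ux - x, z - x\rangle \geq \sum_{i=1}^m \|y^i\|^2 + \sum_{1 \leq j < i \leq m} \langle y^i, y^j\rangle.
\]
After relabeling the indices in the double sum (swap $i \leftrightarrow j$), the right-hand side equals $\sum_{i=1}^m \bigl(\|y^i\|^2 + \sum_{j=i+1}^m \langle y^j, y^i\rangle\bigr) = \sum_{i=1}^m \langle Y_i, y^i\rangle$, which is exactly the claimed lower bound.

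For the second inequality, the key identity is the telescoping relation
\[
\|Y_i\|^2 - \|Y_{i+1}\|^2 = \|Y_i\|^2 - \|Y_i - y^i\|^2 = 2\langle Y_i, y^i\rangle - \|y^i\|^2.
\]
Summing this from $i=1$ to $i=m$ and using $Y_{m+1}=0$ gives $\|Y_1\|^2 = 2\sum_{i=1}^m \langle Y_i, y^i\rangle - \sum_{i=1}^m \|y^i\|^2$, so
\[
\sum_{i=1}^m \langle Y_i, y^i\rangle = \tfrac{1}{2}\|Y_1\|^2 + \tfrac{1}{2}\sum_{i=1}^m \|y^i\|^2 \geq \tfrac{1}{2}\sum_{i=1}^m \|y^i\|^2,
\]
which is the middle inequality. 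The third inequality is then just the standard Cauchy--Schwarz estimate $\bigl\|\sum_{i=1}^m y^i\bigr\|^2 \leq m \sum_{i=1}^m \|y^i\|^2$, applied with equality weights.

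The only non-routine step is the index manipulation in the first inequality. The natural temptation is to try to telescope $\langle y^i, z - u^{i-1}\rangle$ directly against the running point $u^{i-1}$, but this cannot collapse to $\langle Ux - x, z - x\rangle$ because the anchor $u^{i-1}$ varies with $i$; instead, one must shift the anchor from $u^{i-1}$ to the fixed point $x$ and absorb the correction into the cross terms $\langle y^i, y^j\rangle$ with $j<i$. Once this bookkeeping is done, the relabeling symmetry makes the double sum match the target expression, and the remaining two inequalities are essentially algebraic.
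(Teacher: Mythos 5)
Your proof is correct, but your argument for the first inequality takes a genuinely different route from the paper's. The paper proves $\langle Ux-x,z-x\rangle\geq\sum_{i=1}^{m}\langle y^{i}+\cdots+y^{m},y^{i}\rangle$ by induction on $m$, peeling off the first operator $U_{1}$ and applying the induction hypothesis to the composition $U_{m}\cdots U_{2}$ starting from $h=U_{1}x$. You instead sum the cutter inequality $\langle y^{i},z-u^{i-1}\rangle\geq\Vert y^{i}\Vert^{2}$ directly over $i$, shift each anchor from $u^{i-1}$ to $x$, and collect the resulting cross terms $\langle y^{i},y^{j}\rangle$, $j<i$, into the target double sum by relabeling (which is immediate since the inner product is symmetric on a real Hilbert space). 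Your direct summation makes the mechanism more transparent --- it exhibits the lower bound explicitly as $\sum_{i}\Vert y^{i}\Vert^{2}$ plus all distinct cross terms --- at the cost of some index bookkeeping; the paper's induction avoids the bookkeeping but hides where the cross terms come from. For the second inequality you derive the paper's identity (\ref{e-sum-yi}) by telescoping $\Vert Y_{i}\Vert^{2}-\Vert Y_{i+1}\Vert^{2}$, which is a clean way to obtain what the paper simply asserts; the third inequality is the same convexity/Cauchy--Schwarz estimate in both. All steps check out.
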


\begin{proof}
We prove the first inequality in (\ref{e-main2}) by induction on $m$. For
$m=1$ it follows directly from Definition \ref{d-sep}. Suppose that the first
inequality in (\ref{e-main2}) is true for some $m=t$. Define $V_{1}:=I$,
$V_{i}:=U_{i}U_{i-1}\cdots U_{2}$ for $i=2,3,\ldots,t+1$, $v^{1}:=h$, where
$h$ is an arbitrary element of $\mathcal{H}$,$\ v^{i}:=U_{i}v^{i-1}$ and
$z^{i}:=v^{i}-v^{i-1}$, $i=2,3,\ldots,t+1$. If we set $h=U_{1}x$ then, of
course, $S_{i}x=V_{i}h$, $u^{i}=v^{i}$ and $y^{i}=z^{i}$, $i=2,3,\ldots,t+1$.
It follows from the induction hypothesis that%
\begin{equation}
\langle V_{t+1}h-h,z-h\rangle\geq\sum_{i=2}^{t+1}\langle z^{i}+z^{i+1}%
+\cdots+z^{t+1},z^{i}\rangle
\end{equation}
for all $h\in\mathcal{H}$ and $z\in\cap_{i=2}^{t+1}\operatorname*{Fix}U_{i}$.
Thus, if $h=U_{1}x$ then, for all $x\in\mathcal{H}$ and $z\in\cap_{i=1}%
^{t+1}\operatorname*{Fix}U_{i}$, we obtain%
\begin{align}
&  \langle S_{t+1}x-x,z-x\rangle=\langle V_{t+1}h-x,z-x\rangle\nonumber\\
&  =\langle V_{t+1}h-h,z-x\rangle+\langle U_{1}x-x,z-x\rangle\nonumber\\
&  \geq\langle V_{t+1}h-h,z-h\rangle+\langle V_{t+1}h-h,h-x\rangle+\Vert
y^{1}\Vert^{2}\nonumber\\
&  \geq\sum_{i=2}^{t+1}\langle z^{i}+z^{i+1}+\cdots+z^{t+1},z^{i}%
\rangle+\left\langle \sum_{i=2}^{t+1}z^{i},h-x\right\rangle +\Vert y^{1}%
\Vert^{2}\nonumber\\
&  =\sum_{i=2}^{t+1}\langle y^{i}+y^{i+1}+\cdots+y^{t+1},y^{i}\rangle+\langle
y^{2}+y^{3}+\cdots+y^{t+1},y^{1}\rangle+\Vert y^{1}\Vert^{2}\nonumber\\
&  =\sum_{i=1}^{t+1}\langle y^{i}+y^{i+1}+\cdots+y^{t+1},y^{i}\rangle\text{.}%
\end{align}
Therefore, the first inequality in (\ref{e-main2}) is true for $m=t+1,$ and
the induction is complete. The second inequality follows from%
\begin{equation}
\sum_{i=1}^{m}\langle y^{i}+y^{i+1}+\cdots+y^{m},y^{i}\rangle-\frac{1}{2}%
\sum_{i=1}^{m}\Vert y^{i}\Vert^{2}=\frac{1}{2}\Vert\sum_{i=1}^{m}y^{i}%
\Vert^{2}\text{.} \label{e-sum-yi}%
\end{equation}
The third inequality in (\ref{e-main2}) follows from the convexity of the
function $\Vert\cdot\Vert^{2}$.
\end{proof}

\bigskip

\medskip Define the step size function $\sigma_{\max}:\mathcal{H}%
\rightarrow(0,+\infty)$ by%
\begin{equation}
\sigma_{\max}(x):=\left\{
\begin{array}
[c]{ll}%
\displaystyle\frac{\sum_{i=1}^{m}\langle Ux-S_{i-1}x,S_{i}x-S_{i-1}x\rangle
}{\Vert Ux-x\Vert^{2}}, & \text{for \ }x\notin\operatorname*{Fix}U,\\
1, & \text{for \ }x\in\operatorname*{Fix}U.
\end{array}
\right.  \label{e-sigma1}%
\end{equation}
If we set $y^{i}=S_{i}x-S_{i-1}x$, $i=1,2,\ldots,m$, (compare with
(\ref{e-yi})) in Lemma \ref{l-Sx-x}, then we obtain for $x\notin
\operatorname*{Fix}U$%
\begin{equation}
\sigma_{\max}(x)\geq\frac{\frac{1}{2}\sum_{i=1}^{m}\Vert S_{i}x-S_{i-1}%
x\Vert^{2}}{\Vert Ux-x\Vert^{2}}\geq\frac{1}{2m}\text{.} \label{e-sigma1a}%
\end{equation}

\begin{lemma}
\label{l-T-sep}Let $U_{i}:\mathcal{H}\rightarrow\mathcal{H}$ be cutter
operators, $i=1,2,\ldots,m$, with $\bigcap_{i=1}^{m}\operatorname*{Fix}%
U_{i}\neq\emptyset$. The operator $U_{\sigma}:=U_{\sigma,1},$ defined by
$\mathrm{(\ref{e-Tx})}$, where the step size function $\sigma:=\sigma_{\max}$
is given by $\mathrm{(\ref{e-sigma1})}$, is a cutter.
\end{lemma}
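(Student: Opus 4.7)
The plan is to unravel the cutter inequality for $U_\sigma$ and recognize that, with the choice $\sigma=\sigma_{\max}$, it is precisely the first inequality of Lemma~\ref{l-Sx-x} (in the form that was already established there). So the work is essentially algebraic bookkeeping rather than a new estimate.

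First I would recall that, for any cutter characterization, it suffices to verify $\langle U_\sigma x - x, z - x\rangle \geq \|U_\sigma x - x\|^2$ for all $x\in\mathcal{H}$ and all $z\in\operatorname{Fix}U_\sigma$. Since $\operatorname{Fix}U_\sigma=\operatorname{Fix}U=\bigcap_{i=1}^m\operatorname{Fix}U_i$ (noted right after (\ref{e-Tx2})) and $U_\sigma x - x = \sigma_{\max}(x)(Ux-x)$ by (\ref{e-Tx}), this inequality is equivalent, for $x\notin\operatorname{Fix}U$, to
\begin{equation}
\langle Ux - x,\, z - x\rangle \;\geq\; \sigma_{\max}(x)\,\|Ux-x\|^2.
\end{equation}
The case $x\in\operatorname{Fix}U$ is trivial because then $U_\sigma x=x$, so both sides of the cutter inequality vanish.

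Next I would rewrite the right-hand side using the definition (\ref{e-sigma1}) of $\sigma_{\max}$. Setting $y^i:=S_ix-S_{i-1}x$ as in (\ref{e-yi}), so that $Ux-S_{i-1}x=y^i+y^{i+1}+\cdots+y^m$, one has
\begin{equation}
\sigma_{\max}(x)\,\|Ux-x\|^2 \;=\; \sum_{i=1}^{m}\langle Ux-S_{i-1}x,\, S_ix-S_{i-1}x\rangle \;=\; \sum_{i=1}^{m}\langle y^i+y^{i+1}+\cdots+y^m,\, y^i\rangle.
\end{equation}
This is exactly the middle quantity appearing in Lemma~\ref{l-Sx-x}.

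Finally I would invoke the first inequality of Lemma~\ref{l-Sx-x}, which gives $\langle Ux-x,\,z-x\rangle \geq \sum_{i=1}^{m}\langle y^i+\cdots+y^m,\,y^i\rangle$ for every $z\in\bigcap_{i=1}^m\operatorname{Fix}U_i$. Combining with the previous displayed identity yields the required bound, so $U_\sigma$ is a cutter. The only place that might trip a reader is matching notation ($y^i$ defined from $x$ by the composition $S_i$ versus from the intermediate point $u^{i-1}$), but this is exactly the identification made right after (\ref{e-yi}) and in the statement preceding (\ref{e-sigma1a}), so no genuine obstacle arises; the lemma is essentially a reformulation of Lemma~\ref{l-Sx-x} tailored to the step-size choice $\sigma_{\max}$.
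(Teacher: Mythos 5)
Your argument is correct and is essentially the paper's own proof: both reduce the cutter inequality for $U_{\sigma}$ to $\langle Ux-x,z-x\rangle\geq\sigma_{\max}(x)\Vert Ux-x\Vert^{2}$ and then recognize $\sigma_{\max}(x)\Vert Ux-x\Vert^{2}$ as the sum $\sum_{i=1}^{m}\langle y^{i}+\cdots+y^{m},y^{i}\rangle$ bounded by the first inequality of Lemma \ref{l-Sx-x}. Your explicit handling of the trivial case $x\in\operatorname*{Fix}U$ and of the identification $\operatorname*{Fix}U_{\sigma}=\bigcap_{i=1}^{m}\operatorname*{Fix}U_{i}$ is a minor addition of detail, not a different route.
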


\begin{proof}
Taking $z\in\bigcap_{i=1}^{m}\operatorname*{Fix}U_{i},$ the first inequality
in (\ref{e-main2}) with $y^{i}=S_{i}x-S_{i-1}x$, $i=1,2,\ldots,m$, can be
rewritten, for all $x\in\mathcal{H}$, in the form%
\begin{equation}
\Vert Ux-x\Vert^{2}\sigma(x)\leq\langle Ux-x,z-x\rangle\text{.} \label{e-Ux}%
\end{equation}
Therefore, for the operator $U_{\sigma}x,$ defined by (\ref{e-Tx}) with the
step size $\sigma(x)$ as in (\ref{e-sigma1}), we have%
\begin{equation}
\langle U_{\sigma}x-x,z-x\rangle=\sigma(x)\langle Ux-x,z-x\rangle\geq
\sigma^{2}(x)\Vert Ux-x\Vert^{2}=\Vert U_{\sigma}x-x\Vert^{2},
\label{eq:snake}%
\end{equation}
and the proof is complete.
\end{proof}

\begin{theorem}
\label{t-main}Let $U_{i}:\mathcal{H}\rightarrow\mathcal{H}$, $i=1,2,\ldots,m$,
be a cutter operator with $\bigcap_{i=1}^{m}\operatorname*{Fix}U_{i}%
\neq\emptyset$. Let the sequence $\{x^{k}\}_{k=0}^{\infty}$ be defined by
\begin{equation}
x^{k+1}=U_{\sigma,\lambda_{k}}x^{k}\text{,} \label{e-rec}%
\end{equation}
where $U_{\sigma,\lambda}$ is given by $\mathrm{(\ref{e-Tx})}$, $\lambda
_{k}\in\lbrack\varepsilon,2-\varepsilon]$ for an arbitrary constant
$\varepsilon\in(0,1)$, and $\sigma:=\sigma_{\max}$ is given by
$\mathrm{(\ref{e-sigma1})}$. Then%
\begin{align}
\Vert x^{k+1}-z\Vert^{2}  &  \leq\Vert x^{k}-z\Vert^{2}-\frac{\lambda
_{k}(2-\lambda_{k})}{4}\frac{\sum_{i=1}^{m}\Vert S_{i}x^{k}-S_{i-1}x^{k}%
\Vert^{2}}{\Vert Ux^{k}-x^{k}\Vert^{2}}\nonumber\\
&  \leq\Vert x^{k}-z\Vert^{2}-\frac{\lambda_{k}(2-\lambda_{k})}{4m^{2}}\Vert
Ux^{k}-x^{k}\Vert^{2}\text{,} \label{e-xk}%
\end{align}
for all $x^{k}\notin\operatorname*{Fix}U$ and for all $z\in\operatorname*{Fix}%
U$. Consequently, $x^{k}\rightharpoonup x^{\ast}\in\operatorname*{Fix}U$ as
$k\rightarrow\infty$, if one of the following conditions is satisfied:

\begin{itemize}
\item[$\mathrm{(i)}$] $U-I$ is demiclosed at $0$, or

\item[$\mathrm{(ii)}$] $U_{i}-I$ are demiclosed at $0$, for all $i=1,2,\ldots
,m$.
\end{itemize}
\end{theorem}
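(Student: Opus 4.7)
The plan is to combine the cutter structure of $U_{\sigma}$ with Opial's lemma for Fej\'{e}r-monotone sequences. By Lemma \ref{l-T-sep}, $U_{\sigma}$ is a cutter, so by Lemma \ref{l-cut-SQNE} the relaxation $U_{\sigma,\lambda_{k}}$ is $(2-\lambda_{k})/\lambda_{k}$-strongly quasi-nonexpansive; applied at $z\in\operatorname*{Fix}U=\bigcap_{i=1}^{m}\operatorname*{Fix}U_{i}$ this yields $\|x^{k+1}-z\|^{2}\leq\|x^{k}-z\|^{2}-\tfrac{2-\lambda_{k}}{\lambda_{k}}\|x^{k+1}-x^{k}\|^{2}$. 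Since $x^{k+1}-x^{k}=\lambda_{k}\sigma(x^{k})(Ux^{k}-x^{k})$, the penalty equals $\lambda_{k}(2-\lambda_{k})\sigma^{2}(x^{k})\|Ux^{k}-x^{k}\|^{2}$, and combining the definition of $\sigma_{\max}$ with the second and third inequalities of Lemma \ref{l-Sx-x} produces the two lower bounds listed in (\ref{e-xk}).

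From (\ref{e-xk}) the sequence $\{x^{k}\}$ is Fej\'{e}r-monotone with respect to $\operatorname*{Fix}U$, hence bounded, and $\|x^{k}-z\|$ converges for every $z\in\operatorname*{Fix}U$. Telescoping the second inequality of (\ref{e-xk}) and using $\lambda_{k}(2-\lambda_{k})\geq\varepsilon(2-\varepsilon)$ gives $\sum_{k}\|Ux^{k}-x^{k}\|^{2}<\infty$, so $Ux^{k}-x^{k}\to 0$. Under case (i), for any weakly convergent subsequence $x^{k_{l}}\rightharpoonup x^{\ast}$ the demiclosedness of $U-I$ at $0$ forces $x^{\ast}\in\operatorname*{Fix}U$, and the standard Opial lemma for Fej\'{e}r-monotone sequences then promotes this to weak convergence of the whole sequence to a point of $\operatorname*{Fix}U$.

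For case (ii) the key additional step is to upgrade $\|Ux^{k}-x^{k}\|\to 0$ to $\|y_{i}^{k}\|\to 0$ for each $i$, where $y_{i}^{k}:=S_{i}x^{k}-S_{i-1}x^{k}$. Since each $U_{i}$ is itself a cutter, Lemma \ref{l-cut-SQNE} with $\lambda=1$ makes it $1$-strongly quasi-nonexpansive, so $\|S_{i}x^{k}-z\|^{2}\leq\|S_{i-1}x^{k}-z\|^{2}-\|y_{i}^{k}\|^{2}$, and telescoping over $i$ gives
\begin{equation}
\sum_{i=1}^{m}\|y_{i}^{k}\|^{2}\leq\|x^{k}-z\|^{2}-\|Ux^{k}-z\|^{2}=-2\langle x^{k}-z,Ux^{k}-x^{k}\rangle-\|Ux^{k}-x^{k}\|^{2},
\end{equation}
which tends to $0$ by boundedness of $\{x^{k}\}$ and $Ux^{k}-x^{k}\to 0$. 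For any weak cluster point $x^{k_{l}}\rightharpoonup x^{\ast}$ we then also have $S_{i-1}x^{k_{l}}\rightharpoonup x^{\ast}$, because $S_{i-1}x^{k_{l}}-x^{k_{l}}=\sum_{j=1}^{i-1}y_{j}^{k_{l}}\to 0$ in norm, and demiclosedness of $U_{i}-I$ at $0$ applied to $\{S_{i-1}x^{k_{l}}\}$ yields $U_{i}x^{\ast}=x^{\ast}$ for every $i$. Hence $x^{\ast}\in\operatorname*{Fix}U$, and Opial's lemma concludes as in case (i).

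The step I expect to be the main obstacle is case (ii): the bound (\ref{e-xk}) by itself controls only $\|Ux^{k}-x^{k}\|$, so to apply block-wise demiclosedness one must separately exploit the cutter property of each $U_{i}$ and carry out the telescoping above to obtain $\|y_{i}^{k}\|\to 0$ for every $i$, and then transport weak convergence of $\{x^{k_{l}}\}$ along the chain $S_{0},S_{1},\ldots,S_{m-1}$ using the norm-vanishing of the increments.
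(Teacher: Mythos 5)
Your argument is correct, and for the descent inequality and case (i) it coincides with the paper's proof: the paper expands $\Vert x^{k}-z+\lambda_{k}(U_{\sigma}x^{k}-x^{k})\Vert^{2}$ directly and applies the cutter inequality (\ref{eq:snake}), which is exactly the $(2-\lambda_{k})/\lambda_{k}$-strong quasi-nonexpansiveness you extract from Lemmas \ref{l-T-sep} and \ref{l-cut-SQNE}, and then bounds $\sigma^{2}(x^{k})$ from below via (\ref{e-sigma1a}). The one place you genuinely diverge is the step you yourself flag as the crux of case (ii), namely obtaining $\Vert S_{i}x^{k}-S_{i-1}x^{k}\Vert\rightarrow0$. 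The paper reads this off from the telescoped descent inequality itself: the first line of its displayed chain carries the penalty $\frac{\lambda_{k}(2-\lambda_{k})}{4}\,(\sum_{i=1}^{m}\Vert S_{i}x^{k}-S_{i-1}x^{k}\Vert^{2})^{2}/\Vert Ux^{k}-x^{k}\Vert^{2}$, and since $\Vert Ux^{k}-x^{k}\Vert^{2}\leq m\sum_{i=1}^{m}\Vert S_{i}x^{k}-S_{i-1}x^{k}\Vert^{2}$ the summability of these penalties forces (\ref{e-Sixk}). You instead telescope the $1$-strong quasi-nonexpansiveness of the individual cutters over $i$ at fixed $k$ and combine the resulting bound $\sum_{i}\Vert y_{i}^{k}\Vert^{2}\leq\Vert x^{k}-z\Vert^{2}-\Vert Ux^{k}-z\Vert^{2}$ with $\Vert Ux^{k}-x^{k}\Vert\rightarrow0$ and boundedness; this is equally valid, a bit more self-contained, and has the incidental advantage of not depending on the exact exponent in the first inequality of (\ref{e-xk}) (the theorem statement writes $\sum_{i}\Vert S_{i}x^{k}-S_{i-1}x^{k}\Vert^{2}$ in the numerator where the proof's displayed chain, and what the estimate $\sigma_{\max}(x)\geq\frac{1}{2}\sum_{i}\Vert y^{i}\Vert^{2}/\Vert Ux-x\Vert^{2}$ actually yields, is the square of that sum). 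The subsequent transport of weak convergence along the chain $S_{0},S_{1},\ldots,S_{m-1}$ using norm-vanishing increments, and the final appeal to the Fej\'{e}r-monotonicity principle, are the same as in the paper.
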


\begin{proof}
Let $z\in\operatorname*{Fix}U$. By inequality (\ref{e-sigma1a}), we have%
\begin{align}
\Vert x^{k+1}-z\Vert^{2}  &  =\Vert U_{\sigma,\lambda_{k}}x^{k}-z\Vert
^{2}=\Vert x^{k}-z+\lambda_{k}(U_{\sigma}x^{k}-x^{k})\Vert^{2}\nonumber\\
&  =\Vert x^{k}-z\Vert^{2}+\lambda_{k}^{2}\Vert U_{\sigma}x^{k}-x^{k}\Vert
^{2}-2\lambda_{k}\langle U_{\sigma}x^{k}-x^{k},z-x^{k}\rangle\nonumber\\
&  \leq\Vert x^{k}-z\Vert^{2}+\lambda_{k}^{2}\Vert U_{\sigma}x^{k}-x^{k}%
\Vert^{2}-2\lambda_{k}\Vert U_{\sigma}x^{k}-x^{k}\Vert^{2}\nonumber\\
&  =\Vert x^{k}-z\Vert^{2}-\lambda_{k}(2-\lambda_{k})\Vert U_{\sigma}%
x^{k}-x^{k}\Vert^{2}\nonumber\\
&  =\Vert x^{k}-z\Vert^{2}-\lambda_{k}(2-\lambda_{k})\sigma^{2}(x^{k})\Vert
Ux^{k}-x^{k}\Vert^{2}\nonumber\\
&  \leq\Vert x^{k}-z\Vert^{2}-\frac{\lambda_{k}(2-\lambda_{k})}{4}\frac
{(\sum_{i=1}^{m}\Vert S_{i}x^{k}-S_{i-1}x^{k}\Vert^{2})^{2}}{\Vert
Ux^{k}-x^{k}\Vert^{2}}\nonumber\\
&  \leq\Vert x^{k}-z\Vert^{2}-\frac{\lambda_{k}(2-\lambda_{k})}{4m^{2}}\Vert
Ux^{k}-x^{k}\Vert^{2}\text{,}%
\end{align}
for all $x^{k}\notin\operatorname*{Fix}U$. Therefore, $\{\Vert x^{k}%
-z\Vert\}_{k=0}^{\infty}$ is decreasing, $\{x^{k}\}_{k=0}^{\infty}$ is
bounded, and as $k\rightarrow\infty,$ we have%
\begin{equation}
\Vert Ux^{k}-x^{k}\Vert\rightarrow0\text{,} \label{e-Uxk}%
\end{equation}
and%
\begin{equation}
\Vert S_{i}x^{k}-S_{i-1}x^{k}\Vert\rightarrow0\text{,} \label{e-Sixk}%
\end{equation}
for all $i=1,2,\ldots,m$. Let $x^{\ast}\in\mathcal{H}$ be a weak cluster point
of $\{x^{k}\}_{k=0}^{\infty}$ and $\{x^{n_{k}}\}_{k=0}^{\infty}\subset
\{x^{k}\}_{k=0}^{\infty}$ be a subsequence which converges weakly to $x^{\ast
}$.

(i) Suppose that $U-I$ is demiclosed at $0$. Condition (\ref{e-Uxk}) yields
that $x^{\ast}\in\operatorname*{Fix}U$. The convergence of the whole sequence
$\{x^{k}\}_{k=0}^{\infty}$ to $x^{\ast}$ follows now from \cite[Theorem 2.16
(ii)]{BB96}.

(ii) Suppose that $U_{i}-I$ are demiclosed at $0$, for all $i=1,2,\ldots,m$.
Condition (\ref{e-Sixk}) for $i=1$ yields%
\begin{equation}
\Vert U_{1}x^{n_{k}}-x^{n_{k}}\Vert=\Vert S_{1}x^{n_{k}}-S_{0}x^{n_{k}}%
\Vert\rightarrow0\text{ as }k\rightarrow\infty\text{.}%
\end{equation}
Due to demiclosedness of $U_{1}-I$ at $0$, we have that $U_{1}x^{\ast}%
=x^{\ast}$. Since%
\begin{equation}
\Vert(U_{1}x^{n_{k}}-U_{1}x^{\ast})-(x^{n_{k}}-x^{\ast})\Vert=\Vert
U_{1}x^{n_{k}}-x^{n_{k}}\Vert\rightarrow0\text{ as }k\rightarrow\infty\text{,}%
\end{equation}
and $x^{n_{k}}\rightharpoonup x^{\ast}$, as $k\rightarrow\infty$, we have that
$U_{1}x^{n_{k}}\rightharpoonup U_{1}x^{\ast}=x^{\ast}$ as $k\rightarrow\infty
$. Since $U_{2}-I$ is demiclosed at $0$, condition (\ref{e-Sixk}) for $i=2$
implies that $U_{2}x^{\ast}=x^{\ast}$. In a similar way we obtain that
$U_{i}x^{\ast}=x^{\ast}$ for $i=3,4,\ldots,m$. Therefore,%
\begin{equation}
Ux^{\ast}=S_{m}x^{\ast}=U_{m}U_{m-1}\cdots U_{1}x^{\ast}=x^{\ast}\text{.}%
\end{equation}
We conclude that the subsequence $\{x^{n_{k}}\}_{k=0}^{\infty}$ converges
weakly to a fixed point of the operator $U$. The weak convergence of the whole
sequence $\{x^{k}\}_{k=0}^{\infty}$ to $x^{\ast}\in\operatorname*{Fix}U$
follows now from \cite[Theorem 2.16 (ii)]{BB96}.
\end{proof}

\begin{remark}%
\rm\
\label{r-sum-yi}Note that%
\begin{equation}
\sum_{i=1}^{m}\langle y^{i}+y^{i+1}+\cdots+y^{m},y^{i}\rangle=\sum_{i=1}%
^{m}\langle y^{1}+y^{2}+\cdots+y^{i},y^{i}\rangle\text{.}%
\end{equation}
Therefore, the step size $\sigma_{\max}(x)$ for $x\notin\operatorname*{Fix}U$,
given by (\ref{e-sigma1}), can be equivalently written as%
\begin{equation}
\sigma_{\max}(x)=\frac{\sum_{i=1}^{m}\langle S_{i}x-x,S_{i}x-S_{i-1}x\rangle
}{\Vert Ux-x\Vert^{2}}\text{.} \label{e-sigma2}%
\end{equation}
Furthermore, by (\ref{e-sum-yi}), we have%
\begin{equation}
\sigma_{\max}(x)=\frac{\Vert Ux-x\Vert^{2}+\sum_{i=1}^{m}\left\Vert
S_{i}x-S_{i-1}x\right\Vert ^{2}}{2\Vert Ux-x\Vert^{2}}\text{.}
\label{e-sigma2a}%
\end{equation}

\end{remark}

\begin{remark}%
\rm\
\label{r-s1/2}Theorem $\mathrm{\ref{t-main}}$ remains true if we suppose that
the step size function $\sigma$ is an arbitrary function satisfying the
inequalities%
\begin{equation}
\alpha\frac{\sum_{i=1}^{m}\Vert S_{i}x-S_{i-1}x\Vert^{2}}{\Vert Ux-x\Vert^{2}%
}\leq\sigma(x)\leq\sigma_{\max}(x)\text{,}%
\end{equation}

where $\alpha\in(0,1/2]$ and $x\notin\operatorname*{Fix}U$. The existence of
such a function follows from Lemma $\mathrm{\ref{l-Sx-x}}$.
\end{remark}

\begin{remark}
\label{r-s-max}%
\rm\
Even if we take $\lambda=2$, the generalized relaxation $U_{\sigma_{\max
},\lambda}$ needs not to be an extrapolation of $U$ because the inequality
$\sigma_{\max}(x)\geq1/2$ is not guaranteed. We only know that $\sigma_{\max
}(x)\geq1/(2m)$ (see (\ref{e-sigma1a})). It is known, however, that $U$ is
$1/m$-SQNE (see \cite[Proposition 1(d)(iii)]{YO04}), consequently, it is a
$2m/(m+1)$-relaxed cutter (see Lemma \ref{l-cut-SQNE}). Therefore,
$U_{\frac{1+m}{2m}}$ is a cutter. Since $U_{\sigma}$ is a cutter (see Lemma
\ref{l-T-sep}), one can easily check that $U_{\sigma}$ is also a cutter,
where
\begin{equation}
\sigma=\max((1+m)/(2m),\sigma_{\max})\text{.} \label{e-sigma}%
\end{equation}
Therefore, $U_{\sigma,\lambda}$ with $\lambda\in(2m/(m+1),2]$ is an
extrapolation of $U$ and one can expect that an application of $U_{\sigma
,\lambda}$ leads in practice to a local acceleration of the convergence of
sequences generated by the recurrence $x^{k+1}=U_{\sigma,\lambda_{k}}x^{k},$
in comparison to the classical cyclic cutter method $x^{k+1}=Ux^{k}$. Note,
that if we apply the step size $\sigma$ given by (\ref{e-sigma}), then Theorem
\ref{t-main} remains true, because $\left\Vert U_{\sigma_{\max}}x-x\right\Vert
\leq\left\Vert U_{\sigma}x-x\right\Vert $.
\end{remark}

\section{Applications\label{sect:applications}}

In this section we show how our general results unify, generalize and extend
several existing local acceleration schemes. We consider two kinds of
operators $U_{i}$ satisfying the assumptions of Section \ref{sec: main}: (i)
$U_{i}$ is the metric projection onto a closed and convex set $C_{i}%
\subset\mathcal{H}$, and (ii) $U_{i}$ is a subgradient projection onto a set
$C_{i}=\{x\in\mathcal{H}\mid c_{i}(x)\leq0\}$, where $c_{i}:\mathcal{H}%
\rightarrow%
\mathbb{R}
$ is a continuous and convex function, $i=1,2,\ldots,m$. Recall that
$\operatorname*{Fix}P_{C_{i}}=C_{i}$ and that $P_{C_{i}}$ is a firmly
nonexpansive operator (see, e.g., Zarantonello \cite[Lemma 1.2]{Zar71}),
consequently, $P_{C_{i}}$ is both nonexpansive (by the Cauchy--Schwarz
inequality) and a cutter operator (see \cite[Lemma 2.4 (ii)]{BB96}).
Furthermore, Opial's demiclosedness principle yields that the operator
$P_{C_{i}}-I$ is demiclosed at $0$ (see \cite[Lemma 2]{Opi67}).

First we consider the case $U_{i}=P_{C_{i}}$. If $C_{i}\subseteq\mathcal{H}$
is a general closed and convex subset, one can apply Theorem \ref{t-main}
directly. If $C_{i}$ is a hyperplane, i.e., $C_{i}=H(a^{i},b_{i}%
):=\{x\in\mathcal{H}\mid\langle a^{i},x\rangle=b_{i}\}$, where $a^{i}%
\in\mathcal{H}$, $a^{i}\neq0$ and $b_{i}\in\mathbb{R}$, then%
\begin{equation}
U_{i}:=P_{C_{i}}x=x-\frac{\langle a^{i},x\rangle-b_{i}}{\Vert a^{i}\Vert^{2}%
}a^{i}\text{.} \label{e-Uix1}%
\end{equation}
In this case an application of Theorem \ref{t-main} will be presented in
Subsection \ref{subsect:seq-kaczmarz}. If $C_{i}$ is a half-space,
$C_{i}=H_{-}(a^{i},b_{i}):=\{x\in\mathcal{H}\mid\langle a^{i},x\rangle\leq
b_{i}\}$, then%
\begin{equation}
U_{i}:=P_{C_{i}}x=x-\frac{(\langle a^{i},x\rangle-b_{i})_{+}}{\Vert a^{i}%
\Vert^{2}}a^{i}\text{,} \label{e-Uix2}%
\end{equation}

where $\alpha_{+}:=\max(0,\alpha)$ for any real number $\alpha$. In this case
an application of Theorem \ref{t-main} will be presented in Subsection
\ref{subsect:seq-inequlities}.

Now consider the case, where $U_{i}$ is a subgradient projection onto
$C_{i}:=\{x\in\mathcal{H}\mid c_{i}(x)\leq0\}$, where $c_{i}:\mathcal{H}%
\rightarrow%
\mathbb{R}
$ is a continuous and convex function. Then we have
\begin{equation}
U_{i}x=P_{c_{i}}x:=\left\{
\begin{array}
[c]{ll}%
\displaystyle x-\frac{(c_{i}(x))_{+}}{\Vert g_{i}(x)\Vert^{2}}g_{i}(x)\text{,}
& \text{if }g_{i}(x)\neq0\text{,}\\
x\text{,} & \text{if }g_{i}(x)=0\text{,}%
\end{array}
\right.  \label{e-Uix3}%
\end{equation}
where $g_{i}(x)\in\partial c_{i}(x):=\{g\in\mathcal{H}\mid\langle
g,y-x\rangle\geq c_{i}(y)-c_{i}(x),$ for all $y\in\mathcal{H}\}$ is a
subgradient of the function $c_{i}$ at the point $x$. It follows from the
definition of the subgradient that $U_{i}$ is a cutter, $i=1,2,\ldots,m$. Note
that $\operatorname*{Fix}U_{i}=C_{i}$, consequently,%
\begin{equation}
\bigcap_{i=1}^{m}\operatorname*{Fix}U_{i}=C\neq\emptyset\text{.}%
\end{equation}

Suppose that the subgradients $g_{i}$ are bounded on bounded subsets, $i\in I$
(this holds if, e.g., $\mathcal{H}=%
\mathbb{R}
^{n}$, see \cite[Corollary 7.9]{BB96}). Then the operator $U_{i}%
-\operatorname*{Id}$ is demiclosed at $0$, $i\in I$. Indeed, let
$x^{k}\rightharpoonup x^{\ast}$ and $\lim_{k\rightarrow\infty}\Vert U_{i}%
x^{k}-x^{k}\Vert=0$. Then we have%
\begin{equation}
\lim_{k\rightarrow\infty}\Vert U_{i}x^{k}-x^{k}\Vert=\lim_{k\rightarrow\infty
}\frac{(c_{i}(x^{k}))_{+}}{\Vert g_{i}(x^{k})\Vert}=0\text{.} \label{e-Uixk}%
\end{equation}
The sequence $\{x^{k}\}_{k=0}^{\infty}$ is bounded due to its weak
convergence. Since a continuous and convex function is locally
Lipschitz-continuous, the subgradients sequence $\{g_{i}(x^{k})\}_{k=0}%
^{\infty}$ is also bounded. Equality (\ref{e-Uixk}) implies now the
convergence $\lim_{k\rightarrow\infty}c_{i}(x^{k})_{+}=0$. Since $c_{i}$ is
weakly lower semi-continuous, we have $c_{i}(x^{\ast})=0$, i.e.,
$U_{i}-\operatorname*{Id}$ is demiclosed at $0$.

The local acceleration schemes of the cyclic projection and the cyclic
subgradient projection methods, referred to in Subsections
\ref{subsect:seq-kaczmarz}, \ref{subsect:seq-inequlities} and
\ref{subsect:seq-csp} below, follow the same basic principle of the Dos Santos
(DS) local acceleration principle referred to in Section \ref{sec:intro}.
Namely, consider the line through two consecutive iterates of the cyclic
method applied to a linear system of equations and find on it a point closest
to the nonempty intersection of the feasibility problem sets.

\subsection{Local acceleration of the sequential Kaczmarz \newline method for
linear equations\label{subsect:seq-kaczmarz}}

Consider a consistent system of linear equations%
\begin{equation}
\langle a^{i},x\rangle=b_{i},\text{ \ }i=1,2,\ldots,m\text{,} \label{e-ls}%
\end{equation}
where $a^{i}\in\mathcal{H}$, $a^{i}\neq0$ and $b_{i}\in\mathbb{R}$, for all
$i=1,2,\ldots,m$. Let $C_{i}=H(a^{i},b_{i}):=\{x\in\mathcal{H}\mid\langle
a^{i},x\rangle=b_{i}\}$, $U_{i}=P_{C_{i}}$ be defined by (\ref{e-Uix1}),
$i=1,2,\ldots,m$, and assume that $C:=\bigcap_{i=1}^{m}C_{i}\neq\emptyset$.
Denote $U:=U_{m}U_{m-1}\cdots U_{1}$. The operator $U$ is nonexpansive as a
composition of nonexpansive operators. The \textit{Kaczmarz method} for
solving a system of equations (\ref{e-ls}) has the form%
\begin{equation}
x^{k+1}=Ux^{k}\text{,} \label{e-KM}%
\end{equation}
where $x^{0}\in\mathcal{H}$ is arbitrary. The method was introduced by
Kaczmarz in 1937 for a square nonsingular system of linear equations in
$\mathbb{R}^{n}$ (see \cite{Kac37}). It is well-known that for any starting
point $x^{0}\in\mathcal{H}$ any sequence generated by the recurrence
(\ref{e-KM}) converges strongly to a fixed point of the operator $U$ (see
\cite[Theorem 1]{GPR67}\textbf{)}. The local acceleration scheme for this
method (\ref{e-KM}) which we propose here is a special case of the iterative
procedure $x^{k+1}=U_{\sigma,\lambda_{k}}x^{k}$, where the operator
$U_{\sigma,\lambda}$ is defined by (\ref{e-Tx}), the step size function
$\sigma:\mathcal{H}\rightarrow(0,+\infty)$ is defined by (\ref{e-sigma2}) and
the relaxation parameter is $\lambda\in(0,2)$. Since%
\begin{align}
\sum_{i=1}^{m}\langle S_{i}x-x,S_{i}x-S_{i-1}x\rangle &  =\sum_{i=1}%
^{m}\langle S_{i}x-x,U_{i}u^{i-1}-u^{i-1}\rangle\nonumber\\
&  =\sum_{i=1}^{m}\langle S_{i}x-x,\frac{b_{i}-\langle a^{i},u^{i-1}\rangle
}{\Vert a^{i}\Vert^{2}}a^{i}\rangle\nonumber\\
&  =\sum_{i=1}^{m}\frac{b_{i}-\langle a^{i},u^{i-1}\rangle}{\Vert a^{i}%
\Vert^{2}}(\langle S_{i}x,a^{i}\rangle-\langle x,a^{i}\rangle)\nonumber\\
&  =\sum_{i=1}^{m}\frac{b_{i}-\langle a^{i},u^{i-1}\rangle}{\Vert a^{i}%
\Vert^{2}}(b_{i}-\langle a^{i},x\rangle)\text{,}%
\end{align}
we obtain the following form for the step size%
\begin{equation}
\sigma_{\max}(x)=\frac{\displaystyle\sum_{i=1}^{m}(b_{i}-\langle
a^{i},x\rangle)\frac{\displaystyle b_{i}-\langle a^{i},u^{i-1}\rangle
}{\displaystyle\Vert a^{i}\Vert^{2}}}{\Vert Ux-x\Vert^{2}}\text{,}
\label{e-sigma4}%
\end{equation}
where $x\notin\operatorname*{Fix}U$. This step size is equivalent to those in
(\ref{e-sigma1}), (\ref{e-sigma2}) and (\ref{e-sigma2a}).

\begin{corollary}
\label{c-accKac}Let $U:=U_{m}U_{m-1}\cdots U_{1}$, where $U_{i}$ is given by
$\mathrm{(\ref{e-Uix1})}$, $i=1,2,\ldots,m$, the sequence $\{x^{k}%
\}_{k=0}^{\infty}\subset\mathcal{H}$ be defined by the recurrence%
\begin{equation}
x^{k+1}=U_{\sigma,\lambda_{k}}x^{k}=x^{k}+\lambda_{k}\sigma(x^{k}%
)(Ux^{k}-x^{k})
\end{equation}
for all $x^{k}\notin\operatorname*{Fix}U$, where $x^{0}\in\mathcal{H}$ is
arbitrary, $\sigma:=\sigma_{\max}$ is defined by $\mathrm{(\ref{e-sigma4})}$
and $\lambda_{k}\in\lbrack\varepsilon,2-\varepsilon]$ for an arbitrary
constant $\varepsilon\in(0,1)$. Then $\{x^{k}\}_{k=0}^{\infty}$ converges
weakly to a solution of the system $\mathrm{(\ref{e-ls})}$.
\end{corollary}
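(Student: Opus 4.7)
The plan is to recognize that Corollary \ref{c-accKac} is essentially a direct specialization of Theorem \ref{t-main} to the case in which each cutter $U_i$ is the metric projection onto a hyperplane $C_i=H(a^i,b_i)$. Hence the work reduces to verifying each hypothesis of Theorem \ref{t-main} in this concrete setting and confirming that the closed-form step size (\ref{e-sigma4}) coincides with the abstract $\sigma_{\max}$ of (\ref{e-sigma1}).

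First I would check the structural hypotheses. Each $U_i = P_{C_i}$ is firmly nonexpansive, a standard property of the metric projection onto a closed convex set, and is therefore a cutter, as recalled at the beginning of Section \ref{sect:applications}. The consistency assumption $C:=\bigcap_{i=1}^m C_i\neq\emptyset$ supplies $\bigcap_{i=1}^m\operatorname{Fix}U_i\neq\emptyset$, and it was also noted in Section \ref{sec: main} that $\operatorname{Fix}U=\bigcap_{i=1}^m\operatorname{Fix}U_i=C$, so any weak limit in $\operatorname{Fix}U$ is automatically a solution of (\ref{e-ls}). Moreover, each $P_{C_i}$ is nonexpansive, so Opial's demiclosedness principle (cited at the start of Section \ref{sect:applications}) yields that $U_i-I$ is demiclosed at $0$ for every $i$, i.e., hypothesis (ii) of Theorem \ref{t-main} is satisfied.

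Next I would justify that the displayed step size (\ref{e-sigma4}) is admissible. The calculation immediately preceding the corollary, where I substitute the explicit form $U_iu^{i-1}-u^{i-1}=\frac{b_i-\langle a^i,u^{i-1}\rangle}{\Vert a^i\Vert^2}a^i$ from (\ref{e-Uix1}) into expression (\ref{e-sigma2}), shows that (\ref{e-sigma4}) is nothing other than $\sigma_{\max}$ of (\ref{e-sigma1})--(\ref{e-sigma2}). Thus the recurrence of the corollary is literally the recurrence $x^{k+1}=U_{\sigma,\lambda_k}x^k$ of Theorem \ref{t-main}, with the same relaxation bound $\lambda_k\in[\varepsilon,2-\varepsilon]$.

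With all hypotheses verified, the conclusion is immediate: Theorem \ref{t-main} under condition (ii) delivers a weak limit $x^*\in\operatorname{Fix}U=C$, which by the observation above is a solution of (\ref{e-ls}). I do not anticipate a real obstacle: the entire argument is a bookkeeping exercise that feeds the hyperplane-projection specialization into the master theorem. The only point deserving a line of care is the identification $\operatorname{Fix}U=C$ (so that the weak limit is a genuine solution, not merely a fixed point of the composition), but this is covered by the general property of compositions of strongly quasi-nonexpansive operators invoked in Section \ref{sec: main}.
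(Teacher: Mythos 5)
Your proposal is correct and follows essentially the same route as the paper, which presents Corollary \ref{c-accKac} as a direct specialization of Theorem \ref{t-main}: the computation preceding the corollary identifies (\ref{e-sigma4}) with $\sigma_{\max}$, the projections $P_{C_i}$ are cutters with $U_i-I$ demiclosed at $0$ (and $U$ nonexpansive), and $\operatorname{Fix}U=\bigcap_{i=1}^m C_i=C$. Whether one invokes condition (i) via nonexpansiveness of $U$ or condition (ii) via demiclosedness of each $U_i-I$, as you do, is an immaterial difference since the paper records both facts.
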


One can prove even the strong convegence of $\{x^{k}\}_{k=0}^{\infty}$ in
Corollary \ref{c-accKac}. The proof will be presented elsewhere. As mentioned
in Remark \ref{r-s-max}, Corollary \ref{c-accKac} remains true, is we set
$\sigma:=\max((m+1)/(2m),\sigma_{\max})$. In this case, the method is an
extrapolation of the Kaczmarz method if $\lambda_{k}\in(2m/(m+1),2]$.

\begin{remark}%
\rm\
\label{r-opt-ss}When $C_{i}$ are hyperplanes and $U_{i}=P_{C_{i}}$,
$i=1,2,\ldots,m$, then, for any $u\in\mathcal{H}$ and for any $z\in C_{i}$, we
have%
\begin{equation}
\langle U_{i}u-u,z-u\rangle=\Vert U_{i}u-u\Vert^{2}\text{,}%
\end{equation}
$i=1,2,\ldots,m$. Therefore, it follows from the proof of Lemma \ref{l-Sx-x}
that the first inequality in (\ref{e-main2}) is, actually, an equality.
Consequently, we have equality in (\ref{e-Ux}) and the operator $U_{\sigma}$
defined by (\ref{e-Tx}) with the step size $\sigma:=\sigma_{\max}$ given by
(\ref{e-sigma4}) has the property%
\begin{equation}
\langle U_{\sigma}x-x,z-U_{\sigma}x\rangle=0\text{,}%
\end{equation}
for all $z\in C$, or, equivalently,%
\begin{equation}
\langle U_{\sigma}x-x,z-x\rangle=\Vert U_{\sigma}x-x\Vert^{2}\text{.}%
\end{equation}
This yields the following nice property of the operator $U_{\sigma}$%
\begin{equation}
\Vert U_{\sigma}x-z\Vert=\min\{\Vert x+\alpha(Ux-x)-z\Vert\mid\alpha
\in\mathbb{R}\}\text{,} \label{e-ts-min}%
\end{equation}
for all $z\in C$. We can expect that this property leads in practice to a
local acceleration of the convergence to a solution of the system
(\ref{e-ls}), of sequences generated by the recurrence $x^{k+1}=U_{\sigma
,\lambda_{k}}x^{k},$ where the operator $U_{\sigma,\lambda}$ is defined by
(\ref{e-Tx}) with the step size given by (\ref{e-sigma4}) and $\lambda_{k}%
\in\lbrack\varepsilon,2-\varepsilon]$ for an arbitrary constant $\varepsilon
\in(0,1)$.
\end{remark}

\subsection{Local acceleration of the sequential cyclic projection method for
linear inequalities\label{subsect:seq-inequlities}}

Given a consistent system of linear inequalities%
\begin{equation}
\langle a^{i},x\rangle\leq b_{i},\text{ \ }i=1,2,\ldots,m\text{,} \label{e-li}%
\end{equation}
where $a^{i}\in\mathcal{H}$, $a^{i}\neq0$ and $b_{i}\in\mathbb{R}$. Let
$C_{i}=H_{-}(a^{i},b_{i}):=\{x\in\mathcal{H}\mid\langle a^{i},x\rangle\leq
b_{i}\}$ and $U_{i}=P_{C_{i}}$, $i=1,2,\ldots,m$, be defined by (\ref{e-Uix2}%
). Assume that $C=\bigcap_{i=1}^{m}C_{i}\neq\emptyset$. Denoting $S_{0}:=I$,
$S_{i}:=U_{i}U_{i-1}\cdots U_{1}$, $i=1,2,\ldots,m$, we have $U=S_{m}$. By the
nonexpansiveness of $U$, Theorem \ref{t-main}(i) guarantees the weak
convergence of sequences generated by the recurrence $x^{k+1}=U_{\sigma
,\lambda_{k}}x^{k}$, where the starting point $x^{0}\in\mathcal{H}$ is
arbitrary, the step size function is given by (\ref{e-sigma2}) and the
relaxation parameter $\lambda_{k}\in\lbrack\varepsilon,2-\varepsilon]$ for an
arbitrary constant $\varepsilon\in(0,1)$. Since $U$ is nonexpansive, we would
rather prefer to apply the step size $\sigma(x)$ given by (\ref{e-sigma2a}),
and, as explained in Remark \ref{r-s1/2}, the convergence also holds in this
case. Note that property (\ref{e-ts-min}) does not hold in general for a
system of linear inequalities. An application of the step size $\sigma(x)$
given by (\ref{e-sigma2}) ensures that $U_{\sigma}$ is a cutter but does not
guarantee that $\Vert U_{\sigma}x-z\Vert\leq\Vert Ux-z\Vert$ for any
$z\in\bigcap_{i=1}^{m}C_{i}$, unless $\sigma(x)\geq1$.

The step size $\sigma(x)$ can be also presented, equivalently, in the
following way. Denote, as before, $u^{i}=S_{i}x$, i.e., $u^{0}=x$,
$u^{i}=U_{i}u^{i-1}$, $i=1,2,\ldots,m$. Of course, $u^{m}=Ux$ thus,%
\begin{equation}
Ux=u^{0}+\sum_{i=1}^{m}(u^{i}-u^{i-1})=x-\sum_{i=1}^{m}\frac{(\langle
a^{i},u^{i-1}\rangle-b_{i})_{+}}{\Vert a^{i}\Vert^{2}}a^{i}\text{.}%
\end{equation}
For any $z\in C$ we have,%
\begin{align}
\langle Ux-x,z-x\rangle &  =-\left\langle \sum_{i=1}^{m}\frac{(\langle
a^{i},u^{i-1}\rangle-\beta_{i})_{+}}{\Vert a^{i}\Vert^{2}}a^{i}%
,z-x\right\rangle \nonumber\\
&  =\sum_{i=1}^{m}(\langle a^{i},x\rangle-\langle a^{i},z\rangle
)\frac{(\langle a^{i},u^{i-1}\rangle-\beta_{i})_{+}}{\Vert a^{i}\Vert^{2}%
}\nonumber\\
&  \geq\sum_{i=1}^{m}(\langle a^{i},x\rangle-b_{i})\frac{(\langle
a^{i},u^{i-1}\rangle-b_{i})_{+}}{\Vert a^{i}\Vert^{2}}\text{,}%
\end{align}
i.e.,%
\begin{equation}
\langle Ux-x,z-x\rangle\geq\sum_{i=1}^{m}(\langle a^{i},x\rangle-b_{i}%
)\frac{(\langle a^{i},u^{i-1}\rangle-b_{i})_{+}}{\Vert a^{i}\Vert^{2}}\text{.}%
\end{equation}

The same inequality can be obtained by an application of the first inequality
in (\ref{e-main2}). We can apply the above inequality to define the following
step size%
\begin{equation}
\sigma_{\max}(x)=\frac{\displaystyle\sum_{i=1}^{m}(\langle a^{i}%
,x\rangle-\beta_{i})\frac{\displaystyle(\langle a^{i},u^{i-1}\rangle
-b_{i})_{+}}{\displaystyle\Vert a^{i}\Vert^{2}}}{\Vert Ux-x\Vert^{2}}\text{,}
\label{e-sigma5}%
\end{equation}
where $x\notin\operatorname*{Fix}U$. As mentioned in Remark \ref{r-s-max}, an
application of the following step size%
\begin{equation}
\sigma(x)=\max\left(  \frac{m+1}{2m},\sigma_{\max}(x)\right)  \text{,}
\label{e-sigma(x)}%
\end{equation}
where $x\notin\operatorname*{Fix}U$ (compare with Remark \ref{r-s-max}), seems reasonable.

\begin{corollary}
\label{c-acc-ineq}Let $U:=U_{m}U_{m-1}\cdots U_{1}$, where $U_{i}$ is given by
$\mathrm{(\ref{e-Uix2})}$, $i=1,2,\ldots,m$, the sequence $\{x^{k}%
\}_{k=0}^{\infty}\subset\mathcal{H}$ be defined by the recurrence%
\begin{equation}
x^{k+1}=x^{k}+\lambda_{k}\sigma(x^{k})(Ux^{k}-x^{k}),
\end{equation}

for all $x^{k}\notin\operatorname*{Fix}U$, where $x^{0}\in\mathcal{H}$ is
arbitrary, $\sigma:=\sigma_{\max}$ is given by $\mathrm{(\ref{e-sigma5})}$ and
$\lambda_{k}\in\lbrack\varepsilon,2-\varepsilon]$ for an arbitrary constant
$\varepsilon\in(0,1)$. Then $\{x^{k}\}_{k=0}^{\infty}$ converges weakly to a
solution of the system $\mathrm{(\ref{e-li})}$.
\end{corollary}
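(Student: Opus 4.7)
The plan is to obtain this corollary as a direct specialization of Theorem~\ref{t-main}(ii). First I would verify the three structural hypotheses. Each operator $U_i=P_{C_i}$ with $C_i=H_-(a^i,b_i)$ is the metric projection onto a closed and convex half-space, hence firmly nonexpansive; in particular it is a cutter with $\operatorname*{Fix}U_i=C_i$, it is nonexpansive, and by Opial's demiclosedness lemma \cite[Lemma~2]{Opi67} the operator $U_i-I$ is demiclosed at $0$. The nonemptiness $\bigcap_{i=1}^{m}\operatorname*{Fix}U_i=C\neq\emptyset$ is part of the given data, so condition~(ii) of Theorem~\ref{t-main} is available.

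The more substantive step is to justify using the step size $\sigma_{\max}$ given by (\ref{e-sigma5}). The calculation in the paragraphs preceding the corollary already shows that, for every $z\in C$,
\begin{equation}
\langle Ux-x,z-x\rangle\geq\sigma_{\max}(x)\Vert Ux-x\Vert^{2},
\end{equation}
which is exactly the inequality (\ref{e-Ux}) used in the proof of Lemma~\ref{l-T-sep}. Consequently, by the same two-line computation in that proof, $U_{\sigma_{\max}}$ is a cutter, and the Fej\'er-type estimate
\begin{equation}
\Vert x^{k+1}-z\Vert^{2}\leq\Vert x^{k}-z\Vert^{2}-\lambda_{k}(2-\lambda_{k})\sigma_{\max}^{2}(x^{k})\Vert Ux^{k}-x^{k}\Vert^{2}
\end{equation}
follows verbatim from the first half of the proof of Theorem~\ref{t-main}. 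This in turn yields boundedness of $\{x^{k}\}_{k=0}^{\infty}$ and summability of $\sigma_{\max}^{2}(x^{k})\Vert Ux^{k}-x^{k}\Vert^{2}$.

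To pass from summability to $\Vert S_{i}x^{k}-S_{i-1}x^{k}\Vert\to 0$ for each $i$, I would invoke the framework of Remark~\ref{r-s1/2}, which permits any step size bounded below by $\alpha\sum_{i=1}^{m}\Vert S_{i}x-S_{i-1}x\Vert^{2}/\Vert Ux-x\Vert^{2}$ with $\alpha\in(0,1/2]$; the required lower bound here stems from the second inequality in (\ref{e-main2}) combined with the inequality chain given in the excerpt. Once $\Vert S_{i}x^{k}-S_{i-1}x^{k}\Vert\to 0$ for each $i$, the remainder is identical to part~(ii) of the proof of Theorem~\ref{t-main}: any weak cluster point $x^{\ast}$ is shown by induction on $i$, using demiclosedness of $U_{i}-I$ at $0$, to satisfy $U_{i}x^{\ast}=x^{\ast}$ for every $i$, hence $x^{\ast}\in C$, and weak convergence of the full sequence follows from \cite[Theorem~2.16(ii)]{BB96}. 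I expect the most delicate ingredient to be verifying that the step size (\ref{e-sigma5}) — which arises from inequalities involving $\langle a^{i},z\rangle\leq b_{i}$ rather than exact identities — indeed satisfies the lower bound demanded by Remark~\ref{r-s1/2}, since the derivation loses information in passing from a general $z\in C$ to the specific form appearing in (\ref{e-sigma5}).
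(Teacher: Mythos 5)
Your route is the paper's own: Corollary \ref{c-acc-ineq} is a specialization of Theorem \ref{t-main} to $U_{i}=P_{H_{-}(a^{i},b_{i})}$, and your verification of the hypotheses (each $U_{i}$ is a firmly nonexpansive cutter with $\operatorname*{Fix}U_{i}=C_{i}$, $U_{i}-I$ demiclosed at $0$, $C\neq\emptyset$) is exactly what is needed; note that condition (i) of the theorem is also available directly, since $U$ is nonexpansive as a composition of projections, which is the variant the paper invokes.

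The one point you flag and leave open --- whether the step size (\ref{e-sigma5}) meets the lower bound demanded by Remark \ref{r-s1/2} --- is indeed the crux, but your worry that information is lost in replacing $\langle a^{i},z\rangle$ by $b_{i}$ is unfounded, and the gap closes in one line. For a half-space, whenever $(\langle a^{i},u^{i-1}\rangle-b_{i})_{+}>0$ the projection lands on the bounding hyperplane, i.e. $\langle a^{i},U_{i}u^{i-1}\rangle=b_{i}$; hence for every $i$ (both sides vanish when the positive part is zero)
\begin{equation*}
(\langle a^{i},x\rangle-b_{i})\,\frac{(\langle a^{i},u^{i-1}\rangle-b_{i})_{+}}{\Vert a^{i}\Vert^{2}}
=-\frac{(\langle a^{i},u^{i-1}\rangle-b_{i})_{+}}{\Vert a^{i}\Vert^{2}}\bigl(\langle a^{i},S_{i}x\rangle-\langle a^{i},x\rangle\bigr)
=\langle S_{i}x-x,S_{i}x-S_{i-1}x\rangle .
\end{equation*}
Summing over $i$ and using Remark \ref{r-sum-yi}, the numerator of (\ref{e-sigma5}) equals $\sum_{i=1}^{m}\langle Ux-S_{i-1}x,S_{i}x-S_{i-1}x\rangle$; that is, (\ref{e-sigma5}) is not merely a lower estimate of $\langle Ux-x,z-x\rangle/\Vert Ux-x\Vert^{2}$ but coincides exactly with $\sigma_{\max}$ of (\ref{e-sigma1}) (equivalently (\ref{e-sigma2})) for these operators; the inequality $\langle a^{i},z\rangle\leq b_{i}$ enters only in checking that this value is still a valid lower bound for $\langle Ux-x,z-x\rangle$, which is what Lemma \ref{l-T-sep} requires. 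Consequently the bound $\sigma_{\max}(x)\geq\frac{1}{2}\sum_{i=1}^{m}\Vert S_{i}x-S_{i-1}x\Vert^{2}/\Vert Ux-x\Vert^{2}$ of (\ref{e-sigma1a}) holds verbatim, Theorem \ref{t-main} applies without any detour through Remark \ref{r-s1/2}, and the remainder of your argument (Fej\'{e}r monotonicity, $\Vert S_{i}x^{k}-S_{i-1}x^{k}\Vert\rightarrow0$, demiclosedness of each $U_{i}-I$, and \cite[Theorem 2.16(ii)]{BB96}) goes through unchanged.
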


One can prove even the strong convegence of $\{x^{k}\}_{k=0}^{\infty}$ in
Corollary \ref{c-acc-ineq}. The proof will be presented elsewhere. As
mentioned in Remark \ref{r-s-max}, Corollary \ref{c-acc-ineq} remains true, is
we set $\sigma:=\max((m+1)/(2m),\sigma_{\max})$.

\subsection{Local acceleration of the sequential cyclic subgradient projection
method\label{subsect:seq-csp}}

Let $c_{i}:\mathcal{H}\rightarrow\mathbb{R}$ be continuous and convex
functions, $i=1,2,\ldots,m$ and consider the following system of convex
inequalities%
\begin{equation}
c_{i}(x)\leq0\text{, }i=1,2,\ldots,m\text{.}%
\end{equation}
Denote $C_{i}:=\{x\in\mathcal{H}\mid c_{i}(x)\leq0\}$, $i=1,2,\ldots,m$, and
assume that $C=\bigcap_{i=1}^{m}C_{i}\neq\emptyset$. Define the operators
$U_{i}:\mathcal{H}\rightarrow\mathcal{H}$ by (\ref{e-Uix3}), $i=1,2,\ldots,m$.
Letting $S_{0}:=I$, $S_{i}:=U_{i}U_{i-1}\cdots U_{1}$, $u^{i}=S_{i}x$ and
$y^{i}=u^{i}-u^{i-1}$ for all $i=1,2,\ldots,m$, we have, by Remark
\ref{r-sum-yi},%
\begin{align}
\sum_{i=1}^{m}\langle S_{m}x-S_{i-1}x,S_{i}x-S_{i-1}x\rangle &  =\sum
_{i=1}^{m}\langle S_{i}x-x,S_{i}x-S_{i-1}x\rangle\nonumber\\
&  =\sum_{i=1}^{m}\langle U_{i}u^{i-1}-x,U_{i}u^{i-1}-u^{i-1}\rangle
\nonumber\\
&  =-\sum_{i=1}^{m}\frac{(c_{i}(u^{i-1}))_{+}}{\Vert g_{i}(u^{i-1})\Vert^{2}%
}\langle U_{i}u^{i-1}-x,g_{i}(u^{i-1})\rangle\text{,}%
\end{align}
and the step size given by (\ref{e-sigma1}) can be written in the form%
\begin{equation}
\sigma_{\max}(x)=-\frac{\displaystyle\sum_{i=1}^{m}\frac{\displaystyle(c_{i}%
(u^{i-1}))_{+}}{\displaystyle\Vert g_{i}(u^{i-1})\Vert^{2}}\langle
U_{i}u^{i-1}-x,g_{i}(u^{i-1})\rangle}{\Vert Ux-x\Vert^{2}}\text{,}
\label{e-sigma3}%
\end{equation}
where $x\notin\operatorname*{Fix}U$.

Let $U_{\sigma,\lambda}$ be defined by (\ref{e-Tx}), where $\lambda\in(0,2)$
and $\sigma(x)$ is given by (\ref{e-sigma3}). We obtain the following corollary.

\begin{corollary}
\label{c-acc-sp}Let $U:=U_{m}U_{m-1}\cdots U_{1}$, where $U_{i}$ is given by
$\mathrm{(\ref{e-Uix3})}$, $i=1,2,\ldots,m$. Let the sequence $\{x^{k}%
\}_{k=0}^{\infty}\subset\mathcal{H}$ be defined by the recurrence%
\begin{equation}
x^{k+1}=U_{\sigma,\lambda_{k}}x^{k}=x^{k}+\lambda_{k}\sigma(x^{k})\frac
{Ux^{k}-x^{k}}{\Vert Ux^{k}-x^{k}\Vert^{2}},
\end{equation}

for all $x^{k}\notin\operatorname*{Fix}U$, where $x^{0}\in\mathcal{H}$ is
arbitrary, $\sigma:=\sigma_{\max}$ is given by $\mathrm{(\ref{e-sigma3})}$,
and $\lambda_{k}\in\lbrack\varepsilon,2-\varepsilon]$ for an arbitrary
constant $\varepsilon\in(0,1)$. Then $\{x^{k}\}_{k=0}^{\infty}\ $converges
weakly to an element of $C$.
\end{corollary}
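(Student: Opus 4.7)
The plan is to deduce this corollary from Theorem~\ref{t-main}(ii) applied to the family of subgradient projectors $U_i$ of (\ref{e-Uix3}). Accordingly, I have to verify the four hypotheses needed there: each $U_i$ is a cutter, $\bigcap_{i=1}^{m}\operatorname{Fix}U_{i}\neq\emptyset$, the step-size $\sigma$ appearing in (\ref{e-sigma3}) agrees with $\sigma_{\max}$ from (\ref{e-sigma1}), and each $U_i-I$ is demiclosed at $0$.

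The cutter property and the identification $\operatorname{Fix}U_{i}=C_{i}$ follow from the subgradient inequality $c_i(z)\geq c_i(x)+\langle g_i(x),z-x\rangle$ applied to $z\in C_i$: when $c_i(x)>0$ this yields (\ref{e-sep1}) for $U_i$ after a short calculation using the definition of $U_ix$, while if $c_i(x)\leq 0$ then $U_ix=x$ and (\ref{e-sep1}) holds trivially. Combined with the standing hypothesis $C\neq\emptyset$, this supplies $\bigcap_{i}\operatorname{Fix}U_{i}=C\neq\emptyset$. For the step-size, I would substitute $U_iu^{i-1}-u^{i-1}=-\frac{(c_i(u^{i-1}))_+}{\Vert g_i(u^{i-1})\Vert^{2}}\,g_i(u^{i-1})$ into the numerator $\sum_i\langle S_ix-x,S_ix-S_{i-1}x\rangle$ of (\ref{e-sigma2}) and invoke Remark~\ref{r-sum-yi}; this yields the right-hand side of (\ref{e-sigma3}) verbatim, a computation already displayed in the paragraph preceding the statement.

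The only non-trivial point, and what I expect to be the main obstacle, is the demiclosedness of $U_i-I$ at $0$: this fails for an arbitrary subgradient projector, so I would appeal to the hypothesis made explicit just before the statement, namely that each subgradient selection $g_i$ is bounded on bounded subsets of $\mathcal{H}$ (automatic in $\mathbb{R}^n$). Granting this, suppose $x^{n_k}\rightharpoonup x^{\ast}$ and $\Vert U_ix^{n_k}-x^{n_k}\Vert\to 0$. The weakly convergent sequence $\{x^{n_k}\}$ is bounded, hence so is $\{g_i(x^{n_k})\}$, and the identity $\Vert U_ix^{n_k}-x^{n_k}\Vert\cdot\Vert g_i(x^{n_k})\Vert=(c_i(x^{n_k}))_{+}$ (valid also when $g_i(x^{n_k})=0$, where both sides vanish) forces $(c_i(x^{n_k}))_{+}\to 0$. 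Weak lower semicontinuity of the continuous convex function $c_i$ then gives $c_i(x^{\ast})\leq\liminf_k c_i(x^{n_k})\leq 0$, so $x^{\ast}\in C_i=\operatorname{Fix}U_i$. With all four hypotheses in place, Theorem~\ref{t-main}(ii) delivers $x^{k}\rightharpoonup x^{\ast}\in\operatorname{Fix}U=\bigcap_{i}\operatorname{Fix}U_i=C$, which is the claim.
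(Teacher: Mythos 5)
Your proposal is correct and follows exactly the route the paper intends: the corollary is obtained by checking that the subgradient projectors of (\ref{e-Uix3}) are cutters with $\bigcap_i\operatorname{Fix}U_i=C\neq\emptyset$, that (\ref{e-sigma3}) coincides with $\sigma_{\max}$ of (\ref{e-sigma1}) via Remark \ref{r-sum-yi}, and that each $U_i-I$ is demiclosed at $0$ under the boundedness assumption on the subgradients, and then invoking Theorem \ref{t-main}(ii) — all of which is precisely the content of the discussion the paper places immediately before the statement. No gaps.
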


As mentioned in Remark \ref{r-s-max}, Corollary \ref{c-acc-sp} remains true,
is we set $\sigma:=\max((m+1)/(2m),\sigma_{\max})$.\medskip

\textbf{Acknowledgments}. We thank an anonymous referee for his constructive
comments which helped to improve the paper. This work was partially supported
by United States-Israel Binational Science Foundation (BSF) Grant number
200912 and by US Department of Army Award number W81XWH-10-1-0170.\medskip

\end{document}